\@date \else {\vskip3ex \centering\footnotesize\@date\par\vskip1ex}\fi
\else \@footnotetext{\@setdate}\fi}
\begin{document}

\newtheorem{theorem}{Theorem}[section]
\newtheorem{lemma}[theorem]{Lemma}
\newtheorem{definition}[theorem]{Definition}
\newtheorem{conjecture}[theorem]{Conjecture}
\newtheorem{proposition}[theorem]{Proposition}
\newtheorem{claim}[theorem]{Claim}
\newtheorem{fact}[theorem]{Fact}
\newtheorem{corollary}[theorem]{Corollary}
\newtheorem{observation}[theorem]{Observation}
\newtheorem{problem}[theorem]{Open Problem}

\theoremstyle{remark}
\newtheorem{rem}[theorem]{Remark}

\newcommand{\weighted}[1]{\mbox{2-WEIGHTED}^{{(#1)}}}
\newcommand{\Whp}{{\textit{W.h.p. }}}
\newcommand{\whp}{{\textit{w.h.p. }}}
\newcommand{\bin}{\textrm{Bin}}
\newcommand{\po}{\textrm{Po}}
\newcommand{\codeg}{\textrm{codeg}}
\newcommand{\E}{\mathrm{E}}
\newcommand{\V}{\mathrm{Var}}
\newcommand{\RT}{\textup{\textbf{RT}}}
\newcommand{\f}{\textup{\textbf{f}}}
\newcommand{\z}{\textrm{\textbf{z}}}
\newcommand{\ex}{\textrm{\textbf{ex}}}
\newcommand{\sqbs}[1]{\left[ #1 \right]}
\newcommand{\of}[1]{\left( #1 \right)}
\newcommand{\bfrac}[2]{\of{\frac{#1}{#2}}}
\renewcommand{\l}{\ell}
\newcommand{\sm}{\setminus}
\newcommand{\mc}[1]{\mathcal{#1}}
\newcommand{\h}[1]{\mathbb{H}^{({#1})}}
\newcommand{\hh}{\mathbb{H}}
\newcommand{\g}{\mathbb{G}}
\newcommand{\dist}{\mathrm{dist}}
\renewcommand{\l}{\ell}

\title[]{On the number of alternating paths in bipartite complete graphs}

\author{Patrick Bennett, Andrzej Dudek, Elliot Laforge}
\address{Department of Mathematics, Western Michigan University, Kalamazoo, MI}
\email{\tt \{patrick.bennett,\;andrzej.dudek,\;elliot.m.laforge\}@wmich.edu}
\thanks{The second author was supported in part by Simons Foundation Grant \#244712 and by the National Security Agency under Grant Number H98230-15-1-0172. The United States Government is authorized to reproduce and distribute reprints notwithstanding any copyright notation hereon. }

\begin{abstract}
Let $C \subseteq [r]^m$ be a code such that any two words of $C$ have Hamming distance at least $t$. It is not difficult to see that determining 
a code $C$ with the maximum number of words  is equivalent to finding the largest $n$ such that there is an $r$-edge-coloring of $K_{m, n}$ with the property that any pair of vertices in the class of size~$n$ has at least $t$ alternating paths (with adjacent edges having different colors) of length~$2$. 
In this paper we consider a more general problem from a slightly different direction. We are interested in finding maximum $t$ such that there is an $r$-edge-coloring of $K_{m,n}$ such that any pair of vertices in class of size~$n$ is connected by $t$ internally disjoint and alternating paths of length~$2k$. We also study a related problem in which we drop the assumption that paths are internally disjoint. Finally, we introduce a new concept, which we call alternating connectivity. Our proofs make use of random colorings combined with some integer programs. 
\end{abstract}

\date{\today}

\maketitle

\section{Introduction}

In this paper we study alternating paths in bipartite graphs. A path is \emph{alternating} if its adjacent edges have different colors. The notion of alternating paths was originally introduced by Bollob\'as and Erd\H{o}s~\cite{BE}, where the authors studied under which conditions an $r$-edge-colored complete graph contains an alternating Hamiltonian cycle. There is a broad literature on this subject for graphs, see, e.g.,~\cite{AFR,AG, CD, S}, and also for hypergraphs~\cite{DF, DFR}. Several other results, mainly of algorithmic nature, are also known, see,  e.g., a survey paper~\cite{JG}.

The motivation of this paper comes from coding theory. 
Recall that the \emph{Hamming distance} between vectors $\mathbf{x}$ and $\mathbf{y}$ in $[r]^m$ is defined to be the number of positions in which they differ, i.e.,
\[
\dist(\mathbf{x}, \mathbf{y}) = \big{|} \{ i: 1\le i\le m,\ \mathbf{x}(i)\neq \mathbf{y}(i)\} \big{|}.
\]
Let $\alpha_r(m, t)$ be the maximum size of a code $C \subseteq [r]^m$ such that any two elements of $C$ have Hamming distance at least $t$. 
We refer the reader to~\cite{P} for more details concerning coding theory.

Let $K_{m,n}$ be a complete bipartite graph on vertex set $[m]\cup [n]$.
Suppose that $c: E(K_{m,n}) \rightarrow [r]$ is an $r$-edge-coloring of $K_{m,n}$ with the property that every pair of vertices in~$[n]$ is connected by at least~$t$ alternating paths of length 2 (with 3 vertices).  This edge coloring can be represented as a collection of $n$ vectors of length $m$ with entries in $[r]$ in the sense that for a vertex $v \in [n]$ we define the vector $\mathbf{c}_v$ by $\mathbf{c}_v(u) = c(\{v,u\})$ for $u \in [m]$.  Hence the set of vectors $C = \{\mathbf{c}_v : v\in [n] \}$ completely encodes the edge coloring since every edge of $K_{m,n}$ will belong to exactly one of these vectors, and by looking at the right one we can determine the color assigned to the edge. 

Now notice that the number of alternating paths of length 2 between $v,w \in [n]$ is exactly the Hamming distance $\dist(\mathbf{c}_v,\mathbf{c}_w)$.  This is because a path of length 2 is of the form $v-u-w$ which is alternating if and only if $c(\{v,u\}) \neq c(\{u,w\})$ or equivalently $\mathbf{c}_v(u) \neq \mathbf{c}_w(u)$. Since $c$ has the property that every pair of vertices in the same partite set is connected by at least $t$ alternating paths of length 2 it follows that $C$ has minimum Hamming distance $t$,  thus we must have that $|C| = n \le \alpha_r(m,t)$.  
Consequently, determining $\alpha_r(m, t)$ is equivalent to finding the largest $n$ such that there is an $r$-coloring of the edge set of $K_{m, n}$ with the property that any pair of vertices in~$[n]$ has at least $t$ alternating paths of length $2$ connecting them. Clearly all such paths are internally disjoint.

In this paper, we approach the problem from a slightly different direction: instead of fixing the alphabet, word length, and $t$ and asking for the largest possible code, we fix the alphabet, word length, and code size and ask for the largest possible $t$. We also consider longer paths. 
Let $\kappa_{r,2k}(m,n)$ be maximum $t$ such that there is an $r$-coloring of the edges of $K_{m,n}$ such that any pair of vertices in class of size~$n$ is connected by $t$ internally disjoint and alternating paths of length~$2k$. As noted above, $\kappa_{r,2}(m,n)$ is related to coding theory. However we study $\kappa_{r,2k}(m,n)$ for general $k$. In terms of coding theory, each such path is an alternating sequence of codewords and indices, such that each pair of consecutive codewords use different letters in the index that connects them in the sequence.
We will show that for $r\ge2$ and $n\ge m\gg \log n$
\[
\kappa_{r,2}(m,n) \sim \left( 1-\frac{1}{r}\right) m,
\]
and for any $k\ge 2$ and $n\ge m\gg 1$
\[
\kappa_{r,2k}(m,n) \sim  \frac{m}{k}.
\]
These results are essentially best possible since for $m < \log_r n$ we have $\kappa_{r,2}(m,n) = 0$. Indeed, if $m < \log_r n$, then $n > r^m$ and so there must be two vertices in the class of size~$n$ that have the same vectors of colors, and so they have no alternating path of length~2 connecting them. 

We will also consider a related problem in which we drop the requirement that paths are internally disjoint. Let $\lambda_{\ell}(m,n)$ be the maximum $t$ such that there is a $2$-coloring of the edges of $K_{m,n}$ such that any pair of vertices is connected by $t$ alternating paths of length~$\ell$. If $\ell$ is even, then we consider only pairs in the partition class of size~$n$. Determining $\lambda_{\ell}(m,n)$ seems to be more difficult. In particular, we will show that 
\[
\lambda_{3}(m,n) \sim  mn/4
\quad\text{ and }\quad
\lambda_{4}(m,n) \sim  m^2n/8.
\]

In doing so we determine an optimal upper bound on the number of alternating paths of length 3 and 4 in $K_{m,n}$ under any 2-edge-coloring. 

Motivated by studying $\kappa_{r,\ell}(m,n)$, in the last section, we propose a new concept, which we call \emph{alternating connectivity} and define as maximum $t$ such that there is an $r$-edge-coloring of $G$ such that any pair of vertices is connected by $t$ internally disjoint and alternating paths of length~$\ell$. We will discuss it briefly and show that for complete graphs
\[
\kappa_{r,2}(K_n)\sim (1-1/r) n
\quad\text{ and }\quad
\kappa_{r,\ell}(K_n)\sim n/(\ell-1)
\]
for any $r\ge 2$ and $\ell\ge 3$.

Throughout this paper all asymptotics are taken in~$n$. For simplicity, we do not round numbers that are supposed to be integers either up or down; this is justified since these rounding errors are negligible to the asymptomatic calculations we will make. All logarithms are natural unless written explicitly otherwise.

\section{Paths of length 2} \label{sec:two}

In this section we only consider paths of length~2. 
As it was already mention in the introduction here instead of fixing the alphabet $[r]$, word length $m$, and $t$ and asking for the largest possible code in $[r]^m$ with minimum Hamming distance~$t$, we fix the alphabet, word length, and code size and ask for the largest possible $t$.

\begin{theorem}\label{thm:two}
Let $r\ge 2$ and $n\ge m\gg \log n$. Then,
\[
\kappa_{r,2}(m,n) \sim \left( 1-\frac{1}{r}\right) m. 
\]
\end{theorem}
\noindent
This is essentially the best possible since, as it was observed in the introduction, if $m<\log_r(n)$, then $\kappa_{r,2}(m,n)=0$. The proof of this theorem is an immediate consequence of Lemmas~\ref{lem:ub} and~\ref{lem:lb_for_2}.

\begin{lemma}\label{lem:ub}
Let $r\ge 2$ and $n \ge m$ be positive integers. Then, 
\begin{equation}\label{eq:lemma1:1}
\kappa_{r,2}(m,n) \le \left( 1-\frac{1}{r}\right) \left(1+\frac{1}{n-1}\right) m \sim \left( 1-\frac{1}{r}\right)m.
\end{equation}
\end{lemma}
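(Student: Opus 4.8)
The plan is to recognize Lemma~\ref{lem:ub} as a Plotkin-type bound and to prove it by a double-counting (averaging) argument. As explained in the introduction, an $r$-coloring $c$ of $K_{m,n}$ corresponds to a family of vectors $\mathbf{c}_v \in [r]^m$ indexed by $v \in [n]$, and the number of alternating paths of length~$2$ joining $v,w \in [n]$ equals $\dist(\mathbf{c}_v,\mathbf{c}_w)$. Thus $\kappa_{r,2}(m,n)$ is exactly the largest $t$ for which one can choose $n$ vectors in $[r]^m$ whose pairwise Hamming distances are all at least $t$. First I would fix a coloring achieving $t=\kappa_{r,2}(m,n)$ and study the total pairwise distance $S := \sum_{\{v,w\}} \dist(\mathbf{c}_v,\mathbf{c}_w)$, summed over all $\binom{n}{2}$ pairs, estimating it from both sides.

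From below, since each pair contributes at least $t$, we have $S \ge \binom{n}{2} t$. From above, I would compute $S$ coordinate by coordinate, writing $S = \sum_{u\in[m]} D_u$, where $D_u$ is the number of pairs $\{v,w\}$ with $\mathbf{c}_v(u)\neq\mathbf{c}_w(u)$. For a fixed coordinate $u$, let $n_i$ be the number of vertices $v$ with $\mathbf{c}_v(u)=i$, so that $\sum_{i=1}^r n_i = n$. The number of \emph{agreeing} pairs in coordinate $u$ is $\sum_{i=1}^r \binom{n_i}{2}$, and hence $D_u = \binom{n}{2} - \sum_{i=1}^r \binom{n_i}{2}$.

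The key step is to lower bound the number of agreeing pairs, i.e.\ to lower bound $\sum_i \binom{n_i}{2}$. Since $x\mapsto \binom{x}{2} = x(x-1)/2$ is convex and the $n_i$ average to $n/r$, Jensen's inequality gives $\sum_{i=1}^r \binom{n_i}{2} \ge r\binom{n/r}{2}$, so that after a short simplification
\[
D_u \le \binom{n}{2} - r\binom{n/r}{2} = \frac{n^2}{2}\left(1-\frac{1}{r}\right).
\]
Summing over the $m$ coordinates yields $S \le m\cdot \tfrac{n^2}{2}\left(1-\tfrac1r\right)$.

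Combining the two bounds gives $\binom{n}{2} t \le \tfrac{m n^2}{2}\left(1-\tfrac1r\right)$, which rearranges to
\[
t \le \left(1-\frac{1}{r}\right)\frac{n}{n-1}\,m = \left(1-\frac{1}{r}\right)\left(1+\frac{1}{n-1}\right)m,
\]
exactly as claimed, and the asymptotic equivalence is immediate as $n\to\infty$. I do not expect a genuine obstacle here, as this is the classical averaging (Plotkin) argument; the only mildly delicate point is that the $n_i$ need not equal the balanced value $n/r$, but since Jensen's inequality applies to the real extension of $\binom{\cdot}{2}$, it furnishes a valid lower bound on the integer sum regardless, and this is precisely what makes the per-coordinate estimate tight enough to produce the stated constant $1-\tfrac1r$.
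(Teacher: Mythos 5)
Your proof is correct and is essentially the same argument as the paper's, merely phrased in coding-theoretic (Plotkin-bound) language: your coordinate $u$ is the paper's middle vertex, your $n_i$ is its $\deg_i(u)$, your $D_u = \binom{n}{2}-\sum_i\binom{n_i}{2} = \frac12\bigl(n^2-\sum_i n_i^2\bigr)$ is its count of alternating paths through $u$, and your Jensen step is its Cauchy--Schwarz step. Both then conclude by the same double count against $\binom{n}{2}t$.
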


\begin{proof}
Let $K_{m,n}$ be an $r$-edge-colored bipartite graph on $[m]\cup [n]$. For a vertex $v\in [m]$ let $\deg_i(v)$ 
denote the number of edges adjacent to $v$ which are colored by $i$.
Note that the total number of alternating paths of length 2 with the middle vertex $v$ is $\sum_{1\le i<j\le r} \deg_i(v) \deg_j(v)$,
which is
\begin{align*}
\sum_{1\le i<j\le r} \deg_i(v) \deg_j(v) &= \frac{1}{2} \left(\left( \sum_{i=1}^r \deg_i(v)  \right)^2 -  \sum_{i=1}^r \deg_i(v)^2 \right)\\
&= \frac{1}{2} \left(n^2 -  \sum_{i=1}^r \deg_i(v)^2 \right)
\le \frac{1}{2} \left(n^2 -  \frac{n^2}{r} \right) = \frac{r-1}{2r} n^2,
\end{align*}
where the last inequality follows from the Cauchy-Schwarz inequality.
Thus,
\[
\binom{n}{2} \kappa_{r,2}(m,n) \le m\cdot \frac{r-1}{2r} n^2,
\]
which is equivalent to~\eqref{eq:lemma1:1}. 
\end{proof}
%

\begin{lemma}\label{lem:lb_for_2}
Let $r\ge 2$ and  $n\ge m\gg \log n$. Then,
\[
\kappa_{r,2}(m,n) \ge \left( 1-\frac{1}{r}\right) m - o(m).
\]
\end{lemma}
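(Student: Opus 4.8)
The plan is to use a random coloring. Recall from the introduction that an $r$-edge-coloring of $K_{m,n}$ corresponds to assigning a vector $\mathbf{c}_v \in [r]^m$ to each vertex $v \in [n]$, and that the number of alternating paths of length $2$ between $v$ and $w$ equals the Hamming distance $\dist(\mathbf{c}_v, \mathbf{c}_w)$. Hence it suffices to exhibit $n$ vectors in $[r]^m$ all of whose pairwise Hamming distances are at least $(1 - 1/r)m - o(m)$. I would produce these vectors at random: for each $v \in [n]$ choose $\mathbf{c}_v$ uniformly and independently from $[r]^m$; equivalently, color each edge of $K_{m,n}$ uniformly at random from $[r]$.

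First I would analyze a single pair. Fix $v, w \in [n]$. For each coordinate $u \in [m]$ the entries $\mathbf{c}_v(u)$ and $\mathbf{c}_w(u)$ are independent uniform symbols, so they differ with probability $1 - 1/r$, independently across coordinates. Thus $\dist(\mathbf{c}_v, \mathbf{c}_w) \sim \bin(m, 1 - 1/r)$, with mean $(1 - 1/r)m$. A Chernoff bound on the lower tail then gives, for any $\epsilon > 0$,
\[
\Pr\!\left[ \dist(\mathbf{c}_v, \mathbf{c}_w) < \left(1 - \tfrac{1}{r}\right)m - \epsilon m \right] \le \exp\!\left( -c\, \epsilon^2 m \right),
\]
where $c > 0$ is an absolute constant (the factor $r/(2(r-1))$ arising from the standard estimate is bounded below for all $r \ge 2$).

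Next I would take a union bound over all $\binom{n}{2}$ pairs: the probability that some pair has distance below $(1 - 1/r)m - \epsilon m$ is at most $n^2 \exp(-c\epsilon^2 m)$. I want to choose $\epsilon = \epsilon(m,n) \to 0$ so that this bound tends to $0$ while still $\epsilon m = o(m)$. The hypothesis $m \gg \log n$ is exactly what makes this possible: taking, for instance, $\epsilon = (\log n / m)^{1/4}$ yields $\epsilon \to 0$ and $\epsilon^2 m = \sqrt{m \log n} \gg \log n$, so that $n^2 \exp(-c \epsilon^2 m) = \exp\!\big(2\log n - c\sqrt{m\log n}\big) \to 0$.

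Consequently, with probability tending to $1$, the random coloring has every pair of vertices in the class of size~$n$ joined by at least $(1 - 1/r)m - \epsilon m = (1 - 1/r)m - o(m)$ alternating paths of length $2$; in particular such a coloring exists, which yields the claimed bound. The only delicate point is the trade-off in the last step: one must take the deviation $\epsilon m$ small enough to be $o(m)$ yet large enough that the per-pair failure probability beats the $n^2$ factor from the union bound, and this balance is precisely where the assumption $m \gg \log n$ is used.
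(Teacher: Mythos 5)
Your proposal is correct and follows essentially the same route as the paper: color the edges uniformly at random, observe that the number of alternating paths of length $2$ between a fixed pair is $\bin(m,1-1/r)$, apply a lower-tail Chernoff bound with a deviation that is $o(m)$ yet large enough (using $m\gg\log n$) to survive the union bound over $\binom{n}{2}$ pairs. The only cosmetic difference is that the paper picks the deviation $t=\sqrt{5\E(X_{u,v})\log n}$ directly rather than parametrizing by $\epsilon$; both choices satisfy the same trade-off you identify at the end.
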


\begin{proof}
Let $[m]\cup [n]$ be the set of vertices of $K=K_{m,n}$. To each edge in $K$ we assign a color from $\{1,\dots,r\}$ uniformly at random.
For $u,v\in [n]$, let $X_{u,v}$ be the random variable that counts the number of alternating paths between $u$ and $v$ of lengths~2. Clearly, $X_{u,v} \sim \bin(m,(r-1)/r)$. 

We will use the following form of Chernoff's bound (see, e.g., Theorem~2.1 in~\cite{JLR})
\begin{equation*}
\Pr(X \le \E(X)-t) \le \exp\left( -\frac{t^2}{2\E(X)} \right),
\end{equation*}
where $X$ is a random variable with binomial distribution.

Since $\E(X_{u,v}) = (r-1)m/r$ and $m\gg \log n$, we get for $t=\sqrt{5\E(X_{u,v})\log n}$ that $\E(X)-t = (r-1)m/r - o(m)$ and so
\[
\Pr\left(X_{u,v} \le \frac{r-1}{r}m - o(m)\right) \le \exp\left( -\frac{t^2}{2\E(X_{u,v})} \right) = \exp(-5(\log n)/2).
\]
Thus, the union bound taken over all $\binom{n}{2}\le \exp(2\log n)$ pairs of vertices in $[n]$ yields the statement.
\end{proof}

\section{Paths of arbitrary length} \label{thm:any_length}

Here we consider $\kappa$ for paths of any length. Quite surprisingly two colors already suffice to get an optimal result.

\begin{theorem}\label{thm:even}
Let $r\ge2$, $k\ge 2$ and $n\ge m\gg 1$. Then,
\[
\kappa_{r,2k}(m,n) \sim  \frac{m}{k}. 
\]
\end{theorem}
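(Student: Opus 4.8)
The upper bound is immediate and requires no colors at all: any alternating path of length $2k$ between two vertices of $[n]$ passes through exactly $k$ vertices of the side $[m]$ (the odd-position vertices), so $t$ internally disjoint such paths use $tk$ distinct vertices of $[m]$, forcing $t\le m/k$. It therefore remains to construct, for every $m\gg 1$, a $2$-coloring of $K_{m,n}$ in which every pair $u,w\in[n]$ is joined by $(1-o(1))m/k$ internally disjoint alternating paths of length $2k$; since a $2$-coloring is a legal $r$-coloring, this gives $\kappa_{r,2k}(m,n)\ge(1-o(1))m/k$ for all $r\ge2$, matching the upper bound.

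The plan rests on the structure of a two-colored alternating path. Writing such a path as $u=w_0,w_1,\dots,w_{2k}=w$, the colors are forced to strictly alternate, so the entire color sequence is determined by the color of the first edge. Consequently the only edges constrained by the endpoints are the first and the last: $w_1\in[m]$ must be a neighbor of $u$ in a prescribed color and $w_{2k-1}\in[m]$ must be a neighbor of $w$ in a prescribed color, while the remaining $k-2$ interior vertices of $[m]$ and all $k-1$ interior vertices of $[n]$ are free, subject only to local alternation. This is the key simplification: routing one path reduces to choosing two ``anchored'' vertices of $[m]$ of compatible colors (one near $u$, one near $w$) and then connecting them by a short alternating walk through the abundant side $[n]$.

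To make the argument run under the weak hypothesis $m\gg1$ I would \emph{not} color fully at random, since a random palette may leave some vertex of $[n]$ monochromatic toward $[m]$, and two such vertices then have \emph{no} even alternating path at all. Instead, for each $v\in[n]$ I would pick the color vector $\mathbf{c}_v\in\{1,2\}^m$ independently and uniformly among the \emph{balanced} vectors having exactly $m/2$ entries of each color. This deterministically guarantees that every vertex of $[n]$ has exactly $m/2$ neighbors in each color, so every pair admits both types of anchoring, with no union bound over the $n$ vertices. The randomness across the $n$ independent columns is then used only to control quantities that are sums over all $n$ of them: \whp every vertex of $[m]$ has $(1\pm o(1))n/2$ neighbors of each color and, crucially, every pair $i,j\in[m]$ has $(1\pm o(1))n/4$ common neighbors of each of the four color-patterns. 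These codegree estimates concentrate with failure probability $\exp(-\Theta(n))$ and survive a union bound over the $O(m^2)$ pairs, hence hold for every $m\gg1$.

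With the coloring fixed, I would pack the paths for a fixed pair $u,w$ in two steps. First, I select skeletons: balance forces the four endpoint-type counts to satisfy $n_{11}=n_{22}$ and $n_{12}=n_{21}$, and a short integer program (for $k=2$, pairing type-$11$ with type-$22$ vertices and self-pairing the other two types; for $k\ge3$, assigning the interior roles more loosely) shows one can assign roles to all but $o(m)$ vertices of $[m]$, producing $(1-o(1))m/k$ skeletons with valid anchored endpoints. Second, I route the interiors through $[n]$: each skeleton needs its $k-1$ interior $[n]$-vertices to avoid all previously used ones, and the codegree estimates guarantee that the auxiliary ``skeleton--connector'' structure has large degree on the skeleton side while each vertex of $[n]$ is usable by only $O(m)$ skeletons, so that (using $n\ge m$) Hall's condition holds with room to spare. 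The step I expect to be the main obstacle is precisely this routing: the interior $[n]$-vertices are a genuinely contested resource when $n$ is close to $m$ and, for large $k$, a constant fraction of them is consumed, so a naive greedy may stall and a global matching/flow argument driven by the codegree expansion is needed. The second, easier point to get right is that it is \emph{palette balance}, not per-pair concentration, that lets the whole argument survive for all $m\gg1$.
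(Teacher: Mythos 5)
Your upper bound is correct and is exactly the paper's: each alternating path of length $2k$ with both ends in $[n]$ has $k$ internal vertices in $[m]$, so $t$ internally disjoint paths force $tk\le m$. Your lower-bound plan also starts from the right observation --- the paper likewise notes that coloring all edges randomly fails for small $m$ because a vertex of $[n]$ could be monochromatic toward $[m]$, and it too repairs this with balance (it colors randomly only between $M$ and a set $N'\subseteq[n]$ of size $m$, and gives every vertex of $N''=[n]\setminus N'$ one and the same balanced color vector). The genuine gap is the step you yourself flag as the main obstacle: routing the $k-1$ interior $[n]$-vertices of each path. The tools you propose do not close it. Per-pair codegree concentration (each pair in $[m]$ has $\approx n/4$ connectors of each pattern) combined with the degree-counting form of Hall's condition only yields, for a set $S$ of slots, $|N(S)|\ge \frac{n/4}{(k-1)m/k}\,|S|$, because a single vertex of $[n]$ may in principle serve all $(1-o(1))(k-1)m/k$ slots. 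That bound is at least $|S|$ only when $n\ge 4(1-1/k)m$, i.e.\ $n\ge 2m$ already for $k=2$; it fails precisely in the hard regime $n$ close to $m$, where a constant fraction of $[n]$ must be consumed as interior vertices. So ``Hall's condition holds with room to spare (using $n\ge m$)'' is not justified by the estimates you establish.

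What is actually needed is a statement holding simultaneously for \emph{all} linear-sized subsets, not just for all $O(m^2)$ pairs of vertices: the paper's Lemma~\ref{prop:matchings} shows that in $\mathbb{G}(m,m,1/2)$, w.h.p.\ \emph{every} pair of sets $A,B$ of size $\alpha m$ admits a matching of size $\alpha m-o(m)$; the interior $[n]$-vertices are then drawn from pre-chosen disjoint sets $Y_1,\dots,Y_{k-1}\subseteq N'$ and consecutive levels are joined by these color-matchings. Two features of that lemma matter for you. First, its proof is a union bound over the roughly $4^m$ pairs $(A,B)$, beaten by a per-pair Hall-failure probability of $2^{-\Theta(m\log m)}$; an analogous bound with $B$ ranging over all of $\binom{[n]}{\Theta(m)}$ costs a factor $n^{\Theta(m)}$ and need not close when $\log n\gg\log m$ (e.g.\ $m=\log\log n$), so even a corrected routing must confine the interior $[n]$-vertices to a subset of size $O(m)$ --- exactly the role of $N'$. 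Second, because the lemma is proved for all subset pairs in advance, there is no conditioning problem when the skeletons (which depend on the coloring) are chosen first and matched afterwards; a routing argument run after the skeletons are fixed has to address this. Your balanced-column coloring is a clean alternative to the paper's $N''$ device for the anchoring step, but without an all-subsets matching (or comparable expansion) lemma the construction does not deliver $(1-o(1))m/k$ pairwise internally disjoint paths.
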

\noindent
The only case not covered by the above theorem is when $m$ is a constant. For example, in this regime is not difficult to see that
$\kappa_{2,4}(2,n)=0$ and $\kappa_{2,4}(3,n)=1$ for any $n$ sufficiently large. 

The proof of Theorem~\ref{thm:even} is based on the following lemma. Denote by $\mathbb{G}(m,m,1/2)$ the (truly) \emph{random bipartite graph} in which every possible edge between two partition classes of size~$m$ is chosen independently with probability~$1/2$. We say that an event $E_n$ occurs \emph{with high probability}, or \whp for brevity, if $\lim_{n\rightarrow\infty}\Pr(E_n)=1$.
\begin{lemma}\label{prop:matchings}
Let $m \gg 1$ and let $0<\alpha<1$. Let $\mathbb{G}(m,m,1/2)$ be a random bipartite graph on set $M_1\cup M_2$ with $|M_1|=|M_2|=m$.
Then, \whp for each $A\in \binom{M_1}{\alpha m}$ and $B\in \binom{M_2}{\alpha m}$ there exists a matching between $A$ and $B$ of size $\alpha m - o(m)$.
\end{lemma}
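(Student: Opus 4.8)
The plan is to reduce the statement ``every induced pair $(A,B)$ has a near-perfect matching'' to the statement ``no large edgeless bipartite rectangle exists,'' and then control the latter by a first-moment union bound. Fix a small $\epsilon>0$ (eventually I will take $\epsilon=\epsilon(m)\to 0$ slowly). By K\"onig's theorem, the maximum matching between $A$ and $B$ equals the minimum vertex cover of the bipartite graph induced on $A\cup B$. Hence, if some pair $A\in\binom{M_1}{\alpha m}$, $B\in\binom{M_2}{\alpha m}$ admits only a matching of size less than $(\alpha-\epsilon)m$, there is a vertex cover $C\subseteq A\cup B$ with $|C|<(\alpha-\epsilon)m$. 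Writing $A_0=A\setminus C$ and $B_0=B\setminus C$, the cover $C$ meets every edge, so there are no edges between $A_0$ and $B_0$; moreover $|A_0|,|B_0|\le\alpha m$ and $|A_0|+|B_0|=2\alpha m-|C|>(\alpha+\epsilon)m$.

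Thus the failure of the lemma (for this $\epsilon$) implies the existence of sets $A_0\subseteq M_1$ and $B_0\subseteq M_2$ with no edges between them, satisfying $|A_0|,|B_0|\le\alpha m$ and $|A_0|+|B_0|>(\alpha+\epsilon)m$. I would bound the probability of this event directly. For a fixed pair $A_0,B_0$ of sizes $a$ and $b$, the probability that $\mathbb{G}(m,m,1/2)$ has no edge between them is exactly $2^{-ab}$, and there are $\binom{m}{a}\binom{m}{b}\le 4^m$ ways to choose such a pair.

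The crucial observation is that the two sides cannot both be small: from $a+b>(\alpha+\epsilon)m$ and $b\le\alpha m$ we get $a>\epsilon m$, and symmetrically $b>\epsilon m$, so $ab>\epsilon^2 m^2$. Therefore each term of the union bound is at most $4^m\,2^{-\epsilon^2 m^2}$, and summing over the at most $m^2$ admissible size pairs $(a,b)$ gives a total failure probability at most $m^2\,4^m\,2^{-\epsilon^2 m^2}$. This tends to $0$ as $m\to\infty$ provided $\epsilon^2 m^2\gg m$, i.e. $\epsilon\gg m^{-1/2}$; choosing for instance $\epsilon=m^{-1/3}$ yields $\epsilon=o(1)$ together with a matching of size at least $(\alpha-\epsilon)m=\alpha m-o(m)$ \emph{simultaneously} for all pairs $A,B$, which is the assertion.

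The main obstacle is not the probabilistic estimate—the super-exponential factor $2^{-\epsilon^2 m^2}$ comfortably defeats the merely exponential count $4^m$ of subsets—but rather the combinatorial reduction that converts ``some $(A,B)$ has no large matching'' into ``a large edgeless rectangle exists.'' It is exactly the K\"onig/Hall step that forces both parts $A_0,B_0$ of the rectangle to have size exceeding $\epsilon m$, and this two-sided lower bound is what produces the quadratic exponent $ab>\epsilon^2 m^2$; a cruder argument permitting one side to be tiny would forfeit this quadratic gain and the union bound would fail.
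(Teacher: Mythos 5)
Your proof is correct, and it reaches the same probabilistic core as the paper --- a union bound over edgeless rectangles --- but by a genuinely different combinatorial reduction. The paper augments the induced graph $G[A\cup B]$ with two complete parts of size $s=\log m$, verifies Hall's condition for the augmented graph, and shows that a Hall violation forces an edgeless rectangle of shape $s\times(\alpha m/2)$; the resulting exponent $s\cdot\alpha m/2=\Theta(m\log m)$ then beats the $2^{O(m)}$ count of pairs $(A,B)$ and of subsets within them. You instead invoke K\"onig's theorem (equivalently, defect Hall) directly: a matching deficit of $\epsilon m$ yields an edgeless rectangle $A_0\times B_0$ with $|A_0|+|B_0|>(\alpha+\epsilon)m$ and both sides at most $\alpha m$, hence both sides exceeding $\epsilon m$, giving the exponent $ab>\epsilon^2m^2$. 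Your version has two advantages: it dispenses with the auxiliary graph construction entirely, and the bad event no longer references $A$ and $B$ at all, so the union bound runs only over rectangles in $M_1\times M_2$ rather than over pairs $(A,B)$ as well. The trade-off is quantitative: the paper's argument pins the defect down to $\log m$, whereas your choice $\epsilon=m^{-1/3}$ gives a defect of $m^{2/3}$; both are $o(m)$, which is all the lemma claims and all that its application in Theorem~\ref{thm:even} requires. Your closing remark correctly identifies the crux --- the two-sided lower bound $a,b>\epsilon m$ is what makes the exponent quadratic in $m$ and lets the estimate absorb the $4^m$ entropy of choosing the rectangle.
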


\begin{proof}
Fix $A\in \binom{M_1}{\alpha m}$ and $B\in \binom{M_2}{\alpha m}$ and let $G=\mathbb{G}(|A|,|B|,1/2)$. First we consider an auxiliary bipartite graph $H$ on $U\cup W$ such that $U=A\cup A'$, $W=B\cup B'$, $H[A\cup B] = G$, and $H[A'\cup W]$ and $H[U\cup B']$ are complete bipartite graphs. Furthermore, let $s=\log m=|A'|=|B'|$.
We show that $H$ has a perfect matching. It suffices to show that the Hall condition holds, i.e., 
\begin{equation}\label{eq:hallS}
\text{if } S\subseteq U \text{ and } |S| \le |U|/2, \text{ then } |N(S)| \ge |S|,
\end{equation}
and
\begin{equation}\label{eq:hallT}
\text{if } T\subseteq W \text{ and } |T| \le |W|/2, \text{ then } |N(T)| \ge |T|.
\end{equation}
If $|S| < s$, then since $N(S)\supseteq B'$, $|N(S)| \ge |B'|=s \ge |S|$. Therefore, we assume that $s\le |S| \le |U|/2$. Furthermore, we may assume that   $S\cap A' = \emptyset$. For otherwise, $N(S) = W$.
We will show that already for $|S|=s$, $|N(S)| \ge |W|/2=|U|/2$.

Suppose not, that is, $|N(S)| < (\alpha m + \log m)/2$. That means $|B\cap N(S)| < (\alpha m -\log m)/2$, $e(S,B\setminus N(S))=0$ and $|B\setminus N(S)|\ge (\alpha m+\log m)/2>\alpha m/2$.
Observe that the probability that there are sets $S\in A$ and $T\in B$ such that $|S|=s$ and $|T| = \alpha m/2$ and $e(S,T)=0$ is at most
\[
\binom{\alpha m}{s} \binom{\alpha m}{\alpha m/2} 2^{- s \cdot \alpha m/2} 
\le 2^{\alpha m}\cdot 2^{\alpha m} \cdot 2^{- s \cdot \alpha m/2} = 2^{2\alpha m- s \cdot \alpha m/2}.
\]
Thus, with probability at most $2^{2\alpha m- s \cdot \alpha m/2}$ graph $H$ violates \eqref{eq:hallS}, and similarly~\eqref{eq:hallT}.
In other words, with probability at least $1-2\cdot 2^{2\alpha m- s \cdot \alpha m/2}$ graph $H$ has a perfect matching, and consequently, there is a matching of size $\alpha m - s$ between $A$ and $B$.

Finally, by taking the union bound over all  $A\in \binom{M_1}{\alpha m}$ and $B\in \binom{M_2}{\alpha m}$ we get that the probability that there exist $A$ and $B$ such that between $A$ and $B$ there is no matching of size $\alpha m - s$ is at most
\[
\binom{m}{\alpha m} \binom{m}{\alpha m} 2^{2\alpha m- s \cdot \alpha m/2+1} \le 2^m \cdot 2^m  \cdot 2^{2\alpha m- s \cdot \alpha m/2+1} = o(1),
\]
since $s=\log m$. Also, clearly, we get that $\alpha m - s = \alpha m - o(m)$. Thus, \whp for each $A$ and $B$ there is a matching between $A$ and $B$ of size $\alpha m - o(m)$.
\end{proof}

\begin{proof}[Proof of Theorem~\ref{thm:even}]

First observe that the upper bound is trivial since $(1+o(1))m/k$ paths of length $2k$ saturate all vertices in $M$.

Let $M\cup N$ be the bipartition of $K=K_{m,n}$ such that $|M|=m$ and $|N|=n$. Furthermore, let $N = N' \cup N''$, where $|N'|=m$ and $|N''|=n-m$. To each edge in $K$ induced by $M\cup N'$ we assign a color from $\{blue,red\}$ uniformly at random. To all other edges (between $M$ and $N''$) we assign colors in such a way that for each $v\in N''$, $\deg_B(v) = \deg_R(v)$ and for each $w\in M$ and $u,v\in N''$, $\{u,w\}$ and $\{v,w\}$ have the same color (in other words the color vector of each $v\in N''$ is the same). Observe that both the red graph induced on $M\cup N'$ and the blue one can be viewed as $\mathbb{G}(m,m,1/2)$. So actually our $r$-edge-coloring uses only 2 colors.

First we consider $u$ and $v$ in $N''$. Let $X_B(u)$ and $X_R(v)$ be two disjoint subsets of $M$ such that all edges between $u$ and $X_B(u)$ are blue and all edges between $v$ and $X_R(v)$ are red  and $|X_B(u)|=|X_R(v)|=m/k$.
Now we choose subsets $X_1,\dots,X_{k-2}\subseteq M$ and $Y_1,Y_2,\dots,Y_{k-1}\subseteq N'$ which are pairwise disjoint and also disjoint with $X_B(u)$ and $X_R(v)$, and $|X_i|=|Y_j| = m/k$. By Lemma~\ref{prop:matchings} there is a red matching between $X_B(u)$ and $Y_1$, a blue matching between $Y_1$ and $X_1$, etc., each of size $m/k-o(m)$. This yields $m/k-o(m)$ alternating and internally disjoint paths between $u$ and~$v$. 

Now consider $u$ and $v$ in $N'$  and define
\[
N_{xy}(u,v) = \{w\in M: \{w,u\} \text{ has color } x \text{ and } \{w,v\} \text{ has color } y\}. 
\]
Chernoff's bound implies that $|N_{RR}(u,v)| \sim |N_{RB}(u,v)| \sim |N_{BR}(u,v)|\sim |N_{BB}(u,v)| \sim m/4$ for all $u, v \in N'$.
Let $X_B(u)\subseteq N_{BR}(u,v)$, $X_R(v)\subseteq N_{RR}(u,v)$ such that $|X_B(u)| = |X_R(v)| = m/(2k)+o(m)$. In other words,  all edges between $u$ and $X_B(u)$ are blue and all edges between $v$ and $X_R(v)$ are red. Now we choose disjoint subsets $X_1,\dots,X_{k-2}\subseteq N_{BR}(u,v) \cup N_{RR}(u,v) \setminus (X_B(u)\cup X_R(v))$ and $Y_1,\dots,Y_{k-1}\subseteq N'$  such that $|X_i|=|Y_j| = m/(2k)+o(m)$. By Lemma~\ref{prop:matchings} there is a red matching between $X_B(u)$ and $Y_1$, a blue matching between $Y_1$ and $X_1$, etc., each of size $m/(2k)-o(m)$. This yields $m/(2k)-o(m)$ alternating and internally disjoint paths between $u$ and~$v$. To find the remaining $m/(2k)-o(m)$ paths we choose $X'_R(u)\subseteq N_{RB}(u,v)$, $X'_B(v)\subseteq N_{BB}(u,v)$, 
$X'_1,\dots,X'_{k-2}\subseteq N_{RB}(u,v) \cup N_{BB}(u,v) \setminus (X'_R(u)\cup X'_B(v))$, and 
$Y'_1,\dots,Y'_{k-1}\subseteq N'\setminus(Y_1\cup\dots\cup Y_{k-1})$.

Finally observe that the case $u\in N'$ and $v\in N''$ is very similar to the latter.
\end{proof}

Note that the above proof would not work for all $m \gg 1$ if we used the simpler strategy of just coloring all edges randomly. In particular, to get the concentration of degrees we would need $m$ to be at least on the order of $\log n$.

In a similar manner one can define $\kappa_{r,2k+1}(m,n)$. Now, clearly the endpoints are in different partition classes. This case is not interesting since one can easily see that $\kappa_{r,2k+1}(m,n)\sim m/k$. For the lower bound color by red one matching saturating all vertices in the class of size $m$ and all other edges blue. As in the previous proof $m/k$ is best possible.

\section{Not necessary internally disjoint paths}

In this section we are not assuming anymore that paths must be internally disjoint. Furthermore, we will only consider 2-colorings. Recall that $\lambda_{\ell}(m,n)$ denotes the maximum $t$ such that there is a $2$-edge-coloring of $K_{m,n}$ such that any pair of vertices is connected by $t$ alternating paths of length~$\ell$. If $\ell$ is even, then we consider only pairs in partition class of size~$n$.

In general determining $\lambda$ seems to be more difficult than $\kappa$ and as we will see the corresponding results for $\lambda$ significantly differ from those for $\kappa$.

We start with a lower bound.

\begin{theorem}\label{lem:gen:lb}
Let $k \ge 1$ be an integer.
\begin{enumerate}[(i)]
\item\label{lem:gen:lb:even} If $n\ge m\gg \log n$, then
\begin{equation}\label{eq:gen:lb:even}
\lambda_{2k}(m,n) \ge (1+o(1)) m^k n^{k-1}  /2^{2k-1}.
\end{equation}
\item\label{lem:gen:lb:odd} If $n\ge m = \Omega(1)$, then
\begin{equation}\label{eq:gen:lb2:odd}
\lambda_{2k+1}(m,n) \ge (1+o(1)) (mn/4)^k.
\end{equation}
\end{enumerate}
\end{theorem}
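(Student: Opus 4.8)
The plan is to prove both bounds with a single uniformly random $2$-coloring of $E(K_{m,n})$: color each edge red or blue independently with probability $1/2$, write $M\cup N$ for the bipartition with $|M|=m$ and $|N|=n$, and for a fixed admissible pair of endpoints let $X$ count the alternating paths of length $\ell$ joining them. First I would compute $\E X$ by first moments. A simple length-$\ell$ path between the endpoints is specified by its internal vertices, and since with two colors an alternating color pattern on $\ell$ edges is fixed once a single edge is fixed, exactly $2$ of the $2^\ell$ colorings of a given path are alternating; so every potential path is alternating with probability $2^{1-\ell}$. For even $\ell=2k$ with both endpoints in $N$ there are $k$ internal $M$-vertices and $k-1$ internal $N$-vertices, giving $(1+o(1))m^k n^{k-1}$ potential paths and hence $\E X\sim m^k n^{k-1}2^{1-2k}=m^kn^{k-1}/2^{2k-1}$; for odd $\ell=2k+1$ joining $M$ to $N$ there are $k$ internal vertices on each side, giving $(1+o(1))m^kn^k$ paths and $\E X\sim (mn/4)^k$. (Here $m\to\infty$ is used so that the falling factorials counting distinct internal vertices are asymptotic to the corresponding powers.) These are exactly the two target quantities, so it remains to show $X\ge (1-o(1))\E X$ simultaneously for all admissible pairs \whp.

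For the even case I would get concentration directly from the bounded-differences (McDiarmid) inequality applied to the $mn$ independent edge-colors, the point being that recoloring one edge changes $X$ by at most the number of paths through that edge. Each endpoint lies in $N$ and so has only $m$ incident edges, and a fixed edge incident to an endpoint lies in $\Theta(m^{k-1}n^{k-1})$ of the paths; these edges dominate $\sum_e c_e^2\asymp m^{2k-1}n^{2k-2}$, so with $t=\varepsilon\,\E X\asymp \varepsilon\, m^kn^{k-1}$ the tail is of order $\exp(-\Omega(\varepsilon^2 m))$. A union bound over the $\binom{n}{2}$ pairs then closes exactly when $m\gg\log n$, which is the hypothesis of (i).

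The odd case is where the main difficulty lies, and where the weaker hypothesis $m=\Omega(1)$ forces a more careful argument. The obstruction is the endpoint $v\in N$, which has only $m$ incident edges; each of them lies in a $\Theta(1/m)$-fraction of all paths, so a single application of bounded differences gives only $\exp(-\Omega(m))$ and fails for small $m$. The plan is to first reveal (condition on) the $m$ edges incident to $v$. The crucial observation is that a path of \emph{odd} length has exactly one edge incident to $v$, and fixing that edge's color determines the entire alternating pattern with no compatibility constraint; consequently $\E[X\mid v\text{-edges}]=(\#\text{paths})\cdot 4^{-k}\sim (mn/4)^k$ for \emph{every} outcome of the revealed edges. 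Conditioned on the $v$-edges I would then run McDiarmid over the remaining independent edges, which are either incident to the high-degree endpoint $u$ (with $n$ incident edges, each in a $\Theta(1/n)$-fraction of paths) or internal; now $\sum_e c_e^2\asymp m^{2k}n^{2k-1}$, the tail improves to $\exp(-\Omega(\varepsilon^2 n))$ independently of $m$, and a union bound over the $mn$ admissible pairs succeeds for any $m=\Omega(1)$.

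The hard part will be precisely this interplay between low-degree endpoints and concentration. Verifying that, for odd length, the conditional mean is genuinely insensitive to the revealed endpoint colors — the one-edge-determines-the-pattern computation — is the crux that lets the argument reach $m=\Omega(1)$. Contrasting it with the even case, where each path meets \emph{two} low-degree endpoints and the conditional mean is correct only when the colors incident to each endpoint are balanced (a Chernoff estimate that holds for all $2n$ endpoints simultaneously only when $m\gg\log n$), also explains why the two parts of the theorem carry different hypotheses.
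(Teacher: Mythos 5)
Your proposal is correct (with one caveat noted below) but takes a genuinely different route from the paper, most strikingly for part (ii). For the odd case the paper does not randomize at all: it splits $M=M_1\cup M_2$ and $N=N_1\cup N_2$ into halves, colors $M_i$--$N_i$ edges red and the rest blue, and observes that for any pair the alternating pattern forces each internal vertex into one prescribed half, yielding $(m/2)^k(n/2)^k$ paths directly for every pair with no concentration argument whatsoever --- which is exactly why that part carries essentially no hypothesis on $m$. Your alternative, revealing the star at the low-degree endpoint $v$, noting that an odd path meets that star in exactly one edge so the conditional mean $(\#\mathrm{paths})\cdot 4^{-k}$ is the same for every outcome, and then running McDiarmid over the remaining edges to get a tail of $\exp(-\Omega(\varepsilon^2 n))$, is a correct and rather elegant rescue of the random coloring; its advantage is that it generalizes immediately to $r$ colors (cf.\ the remark at the end of Section 4), whereas the product coloring is tied to two colors. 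For part (i) the difference is smaller: the paper also colors randomly but never controls the path count as a single random variable; instead it proves simultaneous concentration of all red/blue degrees and all four codegrees (Chernoff plus a union bound over vertices and pairs, which is where $m\gg\log n$ enters) and then counts paths greedily, picking up the factor $m/4$ at the last internal vertex from the codegree of $(x_{2k-2},v)$. Your bounded-differences computation, with the edges at the two endpoints dominating $\sum_e c_e^2\asymp m^{2k-1}n^{2k-2}$ and the union bound closing precisely when $m\gg\log n$, reaches the same conclusion and arguably makes the role of the hypothesis more transparent. One shared caveat: both your argument and the paper's need $m\to\infty$ to replace falling factorials by powers (and, in the paper's construction for $k\ge 3$, to absorb collisions between internal $M$-vertices forced into the same half), so the ``$m=\Omega(1)$'' in part (ii) should be read as $m\gg 1$; for genuinely constant $m$ (e.g.\ $m\le k$) the stated bound fails, and you were right to flag where your proof uses $m\to\infty$.
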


\begin{proof}
For \eqref{lem:gen:lb:odd} it suffices to consider the following coloring. Let $M=M_1\cup M_2$ and $N=N_1\cup N_2$, $|M_i|=m/2$ and $|N_i|=n/2$,
be partition classes of $K_{m,n}$. Color the edges between $M_i$ and $N_i$ red and blue otherwise. It is easy to see that this coloring yields $\lambda_{2k+1}(m,n) \ge (1+o(1))(m/2)^k (n/2)^k$.

Now we show \eqref{lem:gen:lb:even}. We use the concentration of degrees and codegrees. We fix vertices $u$ and $v$ in $N$. We then choose some $x_1 \in M$ such that $\{u, x_1\}$ is red, some $x_2\in N$ such that $\{x_1, x_2\}$ is blue, and so on until we reach $x_{2k-2} \in N$. So far there are asymptotically $\frac m2$ choices for each $x_i$ if $i$ is odd, and $\frac n2$ choices if $i$ is even. Now we have to choose $x_{2k-1}$ such that the edge $\{x_{2k-2}, x_{2k-1}\}$ is red and $\{x_{2k-1}, v\}$ is blue. There are asymptotically $\frac m4$ choices for $x_{2k-1}$.  This gives $m^kn^{k-1} / 2^{2k}$ choices for paths with a red edge adjacent to $u$. Similarly, we estimate the number of paths with a blue edge adjacent to $u$. 
\end{proof}

It is not clear whether Theorem~\ref{lem:gen:lb} is optimal in general. Here we managed to show tight upper bounds on $\lambda_{3}(m,n)$ and $\lambda_4(m,n)$. Clearly also $\lambda_{2}(m,n) = \kappa_{2,2}(m,n)\sim m/2$.
\begin{theorem}
Let $n\ge m\ge 1$. Then,
\[
\lambda_{3}(m,n) \le  mn/4
\quad\text{ and }\quad
\lambda_{4}(m,n) \le (1+o(1)) m^2n/8.
\]
\end{theorem}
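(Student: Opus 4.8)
The plan is to bound $\lambda_\ell$ by the elementary observation that the minimum (over the admissible pairs) number of alternating $\ell$-paths is at most the average. Every alternating $3$-path joins a pair of vertices in \emph{different} classes, and every alternating $4$-path joins a pair \emph{inside} the class of size $n$; so it suffices to bound the total number $P_3$ of alternating $3$-paths and the total number $P_4$ of alternating $4$-paths, over an arbitrary $2$-coloring, and then divide by $mn$ and $\binom{n}{2}$ respectively. Concretely I would establish $P_3 \le m^2n^2/4$ and $P_4 \le (1+o(1))\,m^2n^3/16$.

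For length $3$, write $M,N$ for the two sides, let $R$ be the number of red edges, and for $a\in N$, $b\in M$ let $r_a,s_b$ denote their red-degrees, so that $\sum_{a} r_a=\sum_{b} s_b=R$ and the blue-degrees are $m-r_a$ and $n-s_b$. Each alternating $3$-path $u\text{-}a\text{-}b\text{-}v$ is determined uniquely by its middle edge $\{a,b\}$ together with an oppositely colored edge at each of $a,b$, and all four vertices are then automatically distinct. Hence
\[
P_3 = \sum_{\{a,b\}\text{ red}} (m-r_a)(n-s_b) + \sum_{\{a,b\}\text{ blue}} r_a s_b .
\]
I would expand both sums; the key point is that the coupled cross term $\sum_{\{a,b\}\text{ red}} r_a s_b$ arising from the first sum cancels \emph{exactly} against the corresponding term of the second, leaving the clean identity
\[
P_3 = mnR + R^2 - m\sum_{b\in M} s_b^2 - n\sum_{a\in N} r_a^2 .
\]
Cauchy--Schwarz gives $\sum_a r_a^2 \ge R^2/n$ and $\sum_b s_b^2 \ge R^2/m$, so $P_3 \le mnR - R^2 = R(mn-R) \le m^2n^2/4$ by AM--GM, with equality at $R=mn/2$. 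Dividing by $mn$ yields $\lambda_3(m,n)\le mn/4$.

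For length $4$ I would decompose each alternating path $u\text{-}a\text{-}b\text{-}c\text{-}v$ at its central vertex $b\in N$ into two alternating $2$-paths meeting at $b$ whose edges at $b$ receive opposite colors. Writing $A_R(b)$ (resp.\ $A_B(b)$) for the number of alternating $2$-paths ending at $b$ whose edge at $b$ is red (resp.\ blue), this yields, up to lower-order corrections from non-distinct endpoints, $P_4=\sum_{b\in N} A_R(b)A_B(b)$. Setting $A(b):=A_R(b)+A_B(b)$, the sum $\sum_b A(b)=2\sum_{a\in M} s_a(n-s_a)\le mn^2/2$ is controlled purely by the degree sequence, and the balanced (random) coloring gives $A_R(b)\sim A_B(b)\sim mn/4$, whence $P_4\sim m^2n^3/16$, matching the target.

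The main obstacle is that the naive per-vertex estimate $A_R(b)A_B(b)\le A(b)^2/4$, combined only with $\sum_b A(b)\le mn^2/2$ and $A(b)\le mn$, gives $P_4\le m^2n^3/8$ --- a constant factor of $2$ too large. The slack comes from a genuine global constraint the bound ignores: a coloring cannot make $A(b)$ near-maximal for many $b$ while keeping each red/blue split balanced, because a vertex $a\in M$ of high blue-degree (which is what inflates $A_R(b)$ through a red edge to $b$) necessarily has small red-degree, hence can be a red-neighbor of only few vertices $b$. Quantifying this tension --- equivalently, controlling the codegree-weighted sums that surface once $\sum_b A_R(b)^2$ is expanded --- is where the real work lies. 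I would handle it by passing to the degree/codegree sequences and optimizing the resulting expression as a constrained (integer/linear) program, in the spirit of the method announced in the abstract, or alternatively by reducing to a near-regular extremal coloring so that the relevant codegrees concentrate around $n/4$ and the bound sharpens to $P_4\le(1+o(1))\,m^2n^3/16$; the $(1+o(1))$ in the statement reflects precisely this approximation. Dividing by $\binom{n}{2}$ then gives $\lambda_4(m,n)\le(1+o(1))\,m^2n/8$.
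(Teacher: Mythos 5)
Your treatment of length $3$ is correct and complete, and it takes a genuinely different route from the paper's. You decompose each alternating $3$-path by its middle edge and derive the exact identity $P_3 = mnR + R^2 - m\sum_b s_b^2 - n\sum_a r_a^2$, after which Cauchy--Schwarz and AM--GM give $P_3 \le R(mn-R) \le m^2n^2/4$. The paper instead sums over pairs $u<v$ in the class of size $m$, expresses the count through color degrees and codegrees, and bounds the resulting integer program via the identity $\codeg_{BR}(u,v)-\codeg_{RB}(u,v)=\deg_B(u)-\deg_B(v)$. Your version is arguably cleaner for this case and even exhibits the extremal edge density $R=mn/2$.

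The length-$4$ part, however, has a genuine gap that you yourself flag: splitting each path at its central vertex $b\in N$ and using $A_R(b)A_B(b)\le A(b)^2/4$ together with $\sum_b A(b)\le mn^2/2$ only yields $P_4\le m^2n^3/8$, a factor of $2$ above the target, and neither of the two repairs you sketch is carried out. In particular, the proposed reduction ``to a near-regular extremal coloring'' is unsupported: proving that an extremal coloring has concentrated codegrees is precisely the content of the missing step. The paper closes exactly this factor of $2$ by decomposing at the two internal vertices $u,v$ in the class of size $m$: the number of such walks through a fixed pair is $\deg_R(u)\codeg_{BR}(u,v)\deg_B(v)+\deg_B(u)\codeg_{RB}(u,v)\deg_R(v)$, and substituting $\codeg_{BR}(u,v)=\tfrac12\bigl(c(u,v)+\deg_B(u)-\deg_B(v)\bigr)$ (and symmetrically for $\codeg_{RB}$), where $c(u,v)=\codeg_{RB}(u,v)+\codeg_{BR}(u,v)$, rewrites this as $\tfrac{c(u,v)}{2}\bigl(\deg_B(u)[n-\deg_B(u)]+\deg_B(v)[n-\deg_B(v)]\bigr)+\tfrac12[\deg_B(u)-\deg_B(v)]^2[c(u,v)-n]\le c(u,v)\,n^2/4$, the correction term being nonpositive since $c(u,v)\le n$. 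Combined with $\sum_{u<v}c(u,v)=\sum_{w\in N}\deg_R(w)\deg_B(w)\le m^2n/4$, this gives $P_4\le m^2n^3/16$. That codegree identity is the quantitative form of the ``global tension'' you correctly describe but do not exploit; without it (or an equivalent) your argument does not reach the stated bound for $\lambda_4$.
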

This theorem will immediately follow from the following result.
\begin{lemma}\label{lem:3_4} 
Let the edges of $K_{m,n}$ be 2-colored. Then, the number of all alternating paths of length 3 is at most $m^2n^2/4$ and the number of all alternating paths of length~4 with two endpoints in the class of size~$n$ is at most $m^2n^3/16$.
\end{lemma}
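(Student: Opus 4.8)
The plan is to count each alternating path through a canonical internal feature, convert the count into a sum involving the colour degrees, and finish with Cauchy--Schwarz; throughout, for a vertex $x$ let $r(x)$ and $b(x)$ denote its red and blue degrees.

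For length $3$ I would orient each alternating path $x_0x_1x_2x_3$ by its central edge $\{x_1,x_2\}$, which has one endpoint $u\in M$ and one endpoint $w\in N$. Since the two end-edges must take the colour opposite to the central edge, the number of paths with a given central edge $\{u,w\}$ is exactly $r(u)r(w)$ if that edge is blue and $b(u)b(w)$ if it is red; moreover all four vertices are automatically distinct, so every path is counted once. Summing over all $(u,w)\in M\times N$ and substituting $b(u)=n-r(u)$, $b(w)=m-r(w)$, the total collapses to $R^2+mnR-n\sum_{w\in N}r(w)^2-m\sum_{u\in M}r(u)^2$, where $R$ is the number of red edges. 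Using $\sum_{u}r(u)^2\ge R^2/m$ and $\sum_{w}r(w)^2\ge R^2/n$ (Cauchy--Schwarz) bounds this by $R(mn-R)\le (mn)^2/4=m^2n^2/4$, as required.

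For length $4$ with both endpoints in $N$ the canonical feature is the central vertex $v\in N$. Each such path reads uniquely as a red-blue-red-blue walk from the appropriate end, so the number of paths is at most the number of these walks, which factorises through $v$ as $\sum_{v\in N}P(v)Q(v)$, where $P(v)=\sum_{u\in M:\,\{u,v\}\text{ blue}}r(u)$ and $Q(v)=\sum_{u\in M:\,\{u,v\}\text{ red}}b(u)$ count the alternating cherries ending at $v$ in blue and in red. The main obstacle is that $P$ and $Q$ are coupled through the colouring (their supports can even be disjoint), so bounding the two factors separately, or applying Cauchy--Schwarz as $\sum P(v)Q(v)\le(\sum P^2\sum Q^2)^{1/2}$, loses a constant factor.

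I would bypass the coupling by the pointwise estimate $P(v)Q(v)\le\tfrac14\big(P(v)+Q(v)\big)^2$. Writing $g(v):=P(v)+Q(v)=\sum_{u\in M}\psi(u,v)$, where $\psi(u,v)$ is the degree of $u$ in the colour opposite to $\{u,v\}$, Cauchy--Schwarz gives $g(v)^2\le m\sum_{u}\psi(u,v)^2$. The decisive simplification is that the second moment telescopes: for a fixed $u$ one has $\psi(u,v)=r(u)$ on the $b(u)$ blue edges at $u$ and $\psi(u,v)=b(u)$ on the $r(u)$ red edges, so $\sum_{v}\psi(u,v)^2=r(u)b(u)\,n$. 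Hence $\sum_{v}g(v)^2\le mn\sum_{u\in M}r(u)b(u)\le mn\cdot m(n/2)^2=m^2n^3/4$, and the number of length-$4$ paths is at most $\tfrac14\sum_v g(v)^2\le m^2n^3/16$. Both inequalities are tight for the balanced colouring in which every colour degree is half, which matches the construction behind Theorem~\ref{lem:gen:lb}.
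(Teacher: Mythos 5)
Your proof is correct, and it takes a genuinely different route from the paper's. The paper decomposes both counts by the \emph{pair of $M$-vertices} on the path, expresses everything through the codegrees $\codeg_{RB}(u,v)$, $\codeg_{BR}(u,v)$, exploits the linear relation $x_{BR}(u,v)-x_{RB}(u,v)=x_B(u)-x_B(v)$, and reduces both problems to the single estimate $\sum_{u<v}c(u,v)\le m^2n/4$ inside an integer-programming framework. You instead decompose by a single canonical internal feature: the central edge for length $3$, and the central $N$-vertex for length $4$. For length $3$ this gives the exact closed form $R^2+mnR-n\sum_w r(w)^2-m\sum_u r(u)^2$ (which I have verified), and two applications of Cauchy--Schwarz yield the sharper intermediate bound $R(mn-R)$, making the extremal condition $R=mn/2$ transparent. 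For length $4$ your count $\sum_v P(v)Q(v)$ of directed red-blue-red-blue walks matches the paper's relaxation (they likewise allow $x_1=x_3$), and the decoupling via $PQ\le\tfrac14(P+Q)^2$ together with the identity $\sum_v\psi(u,v)^2=n\,r(u)b(u)$ is a clean substitute for the paper's codegree algebra. The trade-off: the paper's machinery is set up to extend to longer paths (they exhibit the length-$5$ objective function), whereas your argument is more elementary and self-contained but tailored to the specific symmetric structure of lengths $3$ and $4$. Both give the stated bounds $m^2n^2/4$ and $m^2n^3/16$.
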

\noindent
Indeed, this lemma implies that
\[
\lambda_{3}(m,n) \le  (m^2n^2/4) \big{/} mn =  mn/4,
\]
and
\[
\lambda_{4}(m,n) \le (m^2n^3/16) \big{/} \binom{n}{2} = (1+o(1)) m^2n/8.
\]

\begin{proof}[Proof of Lemma~\ref{lem:3_4}] 
Let $K=K_{m,n}$ be a blue-red edge-colored bipartite graph on $[m]\cup [n]$.

First we count the total number of alternating paths of length 3. Each such path has two vertices $u, v \in [m]$ and two vertices $x_1, x_2 \in [n]$. We assume that $u$ and $v$ are fixed and $u<v$. First we determine the number of red-blue-red paths containing $u$ and $v$. Here we have two possibilities: either $u-x_1-v-x_2$ is red-blue-red or $x_1-u-x_2-v$ is red-blue-red. This yields 
\[
\codeg_{RB}(u,v)\deg_R(v) + \deg_R(u) \codeg_{BR}(u,v)
\]
number of choices, where $\codeg_{xy}(u,v) = |N_{xy}(u,v)|$. Similarly, we see that the number of blue-red-blue paths containing $u$ and $v$ is
\[
\codeg_{BR}(u,v)\deg_B(v) + \deg_B(u) \codeg_{RB}(u,v).
\]
Thus, the number of alternating path of length~3 is never bigger than the solution of the following integer program:
\begin{center}
\begin{tabular}{l l l}
& \texttt{Maximize}    & $\sum_{1\le u<v\le m}  x_{BR}(u,v)[x_R(u)+x_B(v)] + x_{RB}(u,v)[x_B(u)+x_R(v)] $ \\[8pt]
     & \texttt{subject to:} & \textit{The solution is graphical.}\\[5pt]
\end{tabular}
\end{center}
We say that a solution is \emph{graphical} if there exists a 2-edge-coloring of $K_{m,n}$ which realizes all color degrees and codegrees corresponding to the variables of the program.

We will find an upper bound on the solution of this program. First we show that 
\begin{equation}\label{eq:codegrees}
x_{BR}(u,v) - x_{RB}(u,v) = x_B(u) - x_B(v).
\end{equation}
Let $X_B(u)$ and $X_B(v)$ be the set of vertices that are connected to $u$ and $v$ by blue edges, respectively. Furthermore, let $X_{BR}(u,v)$ be the set of vertices $w$ such that $\{u,w\}$ is blue and $\{v,w\}$ is red. Similarly, we define $X_{RB}(u,v)$. Now since the solution is graphical
\[
X_{BR}(u,v) = X_B(u) \setminus (X_B(u)\cap X_B(v))
\ \text{ and } \
X_{RB}(u,v) = X_B(v) \setminus (X_B(u)\cap X_B(v)),
\]
we get
\begin{align*}
x_{BR}(u,v) - x_{RB}&(u,v) = |X_{BR}(u,v)| - |X_{RB}(u,v)|\\
&= |X_B(u) \setminus (X_B(u)\cap X_B(v))| - |X_B(v) \setminus (X_B(u)\cap X_B(v))|\\
&= \left(|X_B(u)| - |(X_B(u)\cap X_B(v))|\right) - \left(|X_B(v)| -  |(X_B(u)\cap X_B(v))|\right) \\
&=|X_B(u)| -  |X_B(v)| = x_B(u) - x_B(v)
\end{align*}
proving~\eqref{eq:codegrees}.

Set $c(u,v) = x_{RB}(u,v) + x_{BR}(u,v)$. Due to~\eqref{eq:codegrees}
\begin{equation}\label{eq:xbr}
x_{BR}(u,v) = \frac{c(u,v) + x_B(u) - x_B(v)}{2}
\end{equation}
and
\begin{equation}\label{eq:xrb}
x_{RB}(u,v) = \frac{c(u,v) - x_B(u) + x_B(v)}{2}.
\end{equation}
Furthermore, since 
\[
x_R(u) = n-x_B(u)\quad \text{ and }\quad x_R(v) = n-x_B(v),
\]
we get
\begin{align*}
x_{BR}(u,v)[x_R(u)&+x_B(v)] + x_{RB}(u,v)[x_B(u)+x_R(v)]\\
&= \frac{c(u,v)+[x_B(u)-x_B(v)]}{2} (n - [x_B(u)-x_B(v)])\\
&\qquad\qquad+\frac{c(u,v)-[x_B(u)-x_B(v)]}{2} (n + [x_B(u)-x_B(v)])\\
&= c(u,v)n - [x_B(u)-x_B(v)]^2 \le c(u,v)n.
\end{align*}
Thus,
\begin{align}\label{proof:3:ub}
\sum_{1\le u<v\le m} x_{BR}(u,v)[x_R(u)+x_B(v)] + x_{RB}(u,v)[x_B(u)&+x_R(v)] \notag \\
& \le n \sum_{1\le u<v\le m} c(u,v). 
\end{align}
Note that for any 2-coloring of the edges of $K$
\[
\sum_{1\le u<v\le m} \codeg_{RB}(u,v) + \codeg_{BR}(u,v) = \sum_{w\in N}\deg_R(w)\deg_B(w),
\]
and since
\[
\sum_{w\in [n]}\deg_R(w)\deg_B(w) \le \sum_{w\in [n]}\left( \frac{\deg_R(w)+\deg_B(w)}{2} \right)^2 = n\cdot \left(\frac{m}{2}\right)^2,
\]
we obtain
\begin{equation}\label{eq:cuv}
\sum_{1\le u<v\le m} c(u,v) = \sum_{1\le u<v\le m}x_{RB}(u,v) + x_{BR}(u,v) \le m^2n/4.
\end{equation}
Thus,  \eqref{proof:3:ub} and \eqref{eq:cuv} imply that $\lambda_3(m,n) \le  m^2n^2/4$.

Now we count the total number of alternating paths of length 4 with both endpoints in~$N$. Each such path is of the form $x_1-u-x_2-v-x_3$, where $u,v\in [m]$ and $x_1,x_2,x_3\in [n]$. Similarly as in the previous case we fix $u,v\in [m]$ with $u<v$ and count the number of paths going through $u,v$. Thus, the number of red-blue-red-blue paths is at most
\[
\sum_{1\le u<v\le m} \deg_R(u)\codeg_{BR}(u,v)\deg_B(v), 
\] 
(we do not assume here that $x_1$, $x_2$ and $x_3$ are different).
Similarly the number of blue-red-blue-red paths is at most
\[
\sum_{1\le u<v\le m} \deg_B(u)\codeg_{RB}(u,v)\deg_R(v) .
\]
Thus, the number of alternating path of length 4 is bounded from above by the solution of the following integer program:
\begin{center}
\begin{tabular}{l l l}
& \texttt{Maximize}    & $\sum_{1\le u<v\le m}  x_R(u)x_{BR}(u,v)x_B(v) + x_B(u)x_{RB}(u,v)x_R(v)  $ \\[8pt]
     & \texttt{subject to:} & \textit{The solution is graphical.}\\[5pt]
\end{tabular}
\end{center}
As before we set $c(u,v) = x_{RB}(u,v) + x_{BR}(u,v)$ and by~\eqref{eq:xbr} and~\eqref{eq:xrb} we get
\begin{align*}
 x_R(u)x_{BR}(u,v)x_B(v)& + x_B(u)x_{RB}(u,v)x_R(v) =\\
 &= \frac{c(u,v) + x_B(u) - x_B(v)}{2} \cdot [n-x_B(u)] x_B(v)\\ 
 &\qquad\qquad+ \frac{c(u,v) - x_B(u) + x_B(v)}{2} \cdot [n-x_B(v)] x_B(u)\\
 &= \frac{c(u,v)}{2} \big{(} x_B(u)[n-x_B(u)]  + x_B(v)[n-x_B(v)] \big{)}\\
 &\qquad\qquad+\frac{1}{2}[x_B(u) - x_B(v)]^2 [c(u,v)-n].
\end{align*}
The last equality follows just from simple algebra operations. Since $c(u,v) \le n$, the  second term is at most 0. Furthermore, clearly
\[
x_B(u)[n-x_B(u)] \le n^2/4
\quad\text{ and }\quad
x_B(v)[n-x_B(v)] \le n^2/4.
\] 
Thus,
\[
 x_R(u)x_{BR}(u,v)x_B(v) + x_B(u)x_{RB}(u,v)x_R(v) \le c(u,v)n^2/4.
\]
Consequently, by~\eqref{eq:cuv}
 \begin{align*}
 \sum_{1\le u<v\le m}  x_R(u)x_{BR}(u,v)x_B(v) &+ x_B(u)x_{RB}(u,v)x_R(v)  \\
& \le \frac{n^2}{4}  \sum_{1\le u<v\le m} c(u,v) \le \frac{n^2}{4} \cdot \frac{m^2n}{4} = \frac{m^2n^3}{16}
 \end{align*}
 finishing the proof for path of length~4.
 \end{proof}

%

Lemma~\ref{lem:3_4} establishes the maximum number of alternating path of length 3 and 4. It is not difficult to see that in theory the approach taken in the proof of this lemma can be extended to counting alternating paths of any length. For example, binding from above the number of paths of length 5 corresponds to the following integer program.
\begin{center}
\begin{tabular}{l l l}
& \texttt{Maximize}    & $\displaystyle{\sum_{1\le u<w<v\le m} f(u,v,w)}$ \\[20pt]
     & \texttt{subject to:} & \textit{The solution is graphical}\\[5pt]
\end{tabular}
\end{center}
where
\begin{align*}
f(u,w,v) &= x_{BR}(u,w)x_{BR}(w,v)[x_R(u)+x_B(v)]\\
&\qquad+ x_{RB}(u,w)x_{RB}(w,v)[x_B(u)+x_R(v)]\\
&\qquad\qquad+ x_{RB}(u,w)x_{BR}(u,v)[x_R(w)+x_B(v)]\\
&\qquad\qquad\qquad+ x_{BR}(u,w)x_{RB}(u,v)[x_B(w)+x_R(v)]\\
&\qquad\qquad\qquad\qquad+ x_{BR}(u,v)x_{RB}(w,v)[x_R(u)+x_B(w)]\\
&\qquad\qquad\qquad\qquad\qquad+ x_{RB}(u,v)x_{BR}(w,v)[x_B(u)+x_R(w)].
\end{align*}
Unfortunately, the objective function is more complicated and computations become more difficult and technical.

One can also show that the problem of counting alternating paths in $K=K_{m,n}$ can be reduced to finding directed paths in a certain digraph. Indeed, let a 2-coloring of edges of $K$ be given. Furthermore, assume that $M\cup N$ is the corresponding  bipartition. We build a bipartite digraph $D$ on $M\cup N$ as follows. For each red edge $\{u,v\}$ with $u\in N$ and $v\in M$ we define a directed edge $uv$ in $D$; otherwise, if $\{u,v\}$ is blue, then we add to $D$ directed edge $vu$. In other words, all red edges are oriented from $N$ to $M$ and blue ones from $M$ to $N$.
Clearly, each alternating path in $K$ corresponds to a (unique) directed path in $D$. Hence, we just reduced the problem of counting alternating paths in $K$ to counting directed paths in bipartite tournament. Unfortunately, in general the problem of counting directed paths does not seem to be easy (see, e.g., \cite{SS}). 

Finally let us mention that one can also study $\lambda$ for any number~$r$ of colors, denoted by $\lambda_{r,\ell}(m,n)$. Since $\lambda_{r,2}(m,n)=\kappa_{r,2}(m,n)$, Theorem~\ref{thm:two} implies that $\lambda_{r,2}(m,n)\sim (1-1/r)m$ provided that $n\ge m\gg \log n$. By assigning to the edges of $K_{m,n}$ colors from set $[r]$ uniformly at random one can see (like in the proof of Theorem~\ref{lem:gen:lb}) that $\lambda_{r,2k}(m,n) \ge (1+o(1)) m^k n^{k-1}  (1-1/r)^{2k-1}$ and $\lambda_{2k+1}(m,n) \ge (1+o(1)) (mn)^k (1-1/r)^{2k}$. The optimality of these bounds remains open.

\section{Remarks on alternating connectivity}

Motivated by studying internally disjoint and alternating paths in a complete bipartite graph we introduce alternating connectivity.
Let $\kappa_{r,\ell}(G)$ be \emph{alternating connectivity} of a graph~$G$ defined as maximum $t$ such that there is an $r$-edge-coloring of $G$ such that any pair of vertices is connected by $t$ internally disjoint and alternating paths of length~$\ell$. 

As in Sections~\ref{sec:two} and~\ref{thm:any_length} we show that for complete graphs the following holds.

\begin{theorem}
Let $n\gg 1$. Then, for any number of colors $r\ge 2$
\begin{equation}\label{eq:alter_con_i}
\kappa_{r,2}(K_n)\sim (1-1/r) n,
\end{equation}
and for $\ell\ge 3$
\begin{equation}\label{eq:alter_con_ii}
\kappa_{r,\ell}(K_n)\sim n/(\ell-1).
\end{equation}
\end{theorem}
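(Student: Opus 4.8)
The plan is to treat the two displayed claims separately, in each case establishing a matching upper and lower bound. For the upper bounds, I would use the same saturation idea that appears in the earlier sections: each internally disjoint alternating path of length $\ell$ between a fixed pair $u,v$ uses $\ell-1$ internal vertices, all distinct from $u,v$ and from each other across different paths, so at most $(n-2)/(\ell-1) \sim n/(\ell-1)$ such paths can coexist, giving $\kappa_{r,\ell}(K_n) \le (1+o(1))n/(\ell-1)$ for $\ell \ge 3$. For $\ell = 2$, the internal vertices are single middle vertices, and between a fixed pair $u,v$ the number of alternating paths of length $2$ is $\sum_{1\le i<j\le r}\deg_i^{u,v}(w)$-type count; mimicking the Cauchy--Schwarz computation in the proof of Lemma~\ref{lem:ub} (now summing over the $n-2$ potential middle vertices rather than over a separate class of size $m$) yields the bound $(1-1/r)(n-2) \sim (1-1/r)n$.

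For the lower bounds I would again rely on random colorings combined with concentration, exactly as in Lemmas~\ref{lem:lb_for_2} and~\ref{prop:matchings}. For \eqref{eq:alter_con_i}, color every edge of $K_n$ independently and uniformly from $[r]$; for a fixed pair $u,v$, each of the other $n-2$ vertices $w$ yields an alternating path $u\text{--}w\text{--}v$ precisely when the two edges receive different colors, an event of probability $1-1/r$, and these events are independent across $w$. Chernoff's bound (the form quoted before Lemma~\ref{lem:lb_for_2}) gives concentration near $(1-1/r)(n-2)$, and since $n\gg 1$ a union bound over the $\binom n2$ pairs shows that \whp\ every pair achieves $(1-1/r)n - o(n)$ paths, matching the upper bound.

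For \eqref{eq:alter_con_ii} with $\ell \ge 3$, I would build the $\sim n/(\ell-1)$ internally disjoint paths between a fixed pair $u,v$ by concatenating matchings across a partition of the remaining vertices into $\ell-1$ blocks, in direct analogy with the proof of Theorem~\ref{thm:even}: after a uniform random $r$-coloring, restrict to two colors, split $V \setminus \{u,v\}$ into $\ell-1$ groups of size about $n/(\ell-1)$, and use the Hall/matching argument of Lemma~\ref{prop:matchings} (in its complete-graph rather than bipartite guise) to stitch together alternating paths of size $n/(\ell-1) - o(n)$. The main obstacle I anticipate is bookkeeping for the parity of $\ell$: when $\ell$ is odd the two endpoints behave symmetrically, but when $\ell$ is even one must split each endpoint's available vertices by the color of the first edge (as in the $N'$-case of Theorem~\ref{thm:even}, using the codegree sets $N_{xy}$), which costs a factor-two refinement of the block sizes but still delivers the full $n/(\ell-1) - o(n)$ after combining the two color-classes of starting edges. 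Since $K_n$ is not bipartite, I would also need to verify that Lemma~\ref{prop:matchings}'s argument transfers to matchings between two disjoint vertex subsets of $K_n$ under a random $2$-coloring, which it does verbatim because the relevant edges are still present and independently colored.
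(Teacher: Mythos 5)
Your proposal matches the paper's proof in all essentials: the same saturation and Cauchy--Schwarz upper bounds, the same random-coloring-plus-Chernoff lower bound for $\ell=2$, and the same matching-concatenation construction (via a complete-graph analogue of Lemma~\ref{prop:matchings} and the codegree sets $N_{xy}(u,v)$) for $\ell\ge 3$. One tiny correction: the paper performs the factor-two split into two families of paths, distinguished by the colors of the edges at the endpoints, for \emph{both} parities of $\ell$ (only the choice of which codegree sets host the endpoints' neighbors changes), not just for even $\ell$ as you suggest; this does not affect the validity of your outline.
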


\begin{proof}
For~\eqref{eq:alter_con_i}  observe that like in the proof of Lemma~\ref{lem:ub} the total number of alternating paths of length~2 in $K_n$ is at most (due to the Cauchy-Schwarz inequality)
\begin{align*}
\sum_{v\in V(K_n)}   \sum_{1\le i<j\le r} \deg_i(v) \deg_j(v) &= \sum_{v\in V(K_n)} \frac{1}{2} \left(\left( \sum_{i=1}^r \deg_i(v)  \right)^2 -  \sum_{i=1}^r \deg_i(v)^2 \right)\\
&= \sum_{v\in V(K_n)}\frac{1}{2} \left((n-1)^2 -  \sum_{i=1}^r \deg_i(v)^2 \right)
\le \frac{r-1}{2r} (n-1)^2n,
\end{align*}
and so 
\[
\kappa_{r,2}(K_n) \le \frac{r-1}{2r} (n-1)^2n \big{/} \binom{n}{2} = \left( 1-\frac{1}{r}\right) (n-1).
\]
On the other hand, assigning colors from $\{1,\dots,r\}$ uniformly at random to the edges of $K_n$ (as in the proof of Lemma~\ref{lem:lb_for_2}) yields $\kappa_{r,2}(K_n)\ge  (1-1/r) n - o(n)$, provided that $n\gg 1$. This finishes the proof of~\eqref{eq:alter_con_i}. 

For~\eqref{eq:alter_con_ii} we slightly modify proofs from Section~\ref{thm:any_length}. Clearly, the upper bound is trivial, since any collection of $n/(\ell-1)$ internally disjoint paths of length~$\ell$ must saturate all vertices. For the lower bound we will show that there exists a 2-coloring of the edges with the required property. First observe that one can easily generalize Lemma~\ref{prop:matchings} for a (truly) random graph $\mathbb{G}(n,1/2)$, where each edge is chosen independently with probability $1/2$.
\begin{lemma}\label{prop:matchings_r}
Let $n \gg 1$ and $0<\alpha<1/2$. Let $\mathbb{G}(n,1/2)$ be a random graph on set of vertices~$[n]$.
Then, \whp for each $A\in \binom{[n]}{\alpha n}$ and $B\in \binom{[n]}{\alpha n}$ with $A\cap B = \emptyset$ there exists a matching between $A$ and $B$ of size $\alpha n - o(n)$.
\end{lemma}

Now we are ready to finish the proof of~\eqref{eq:alter_con_ii}. Assign colors from $\{blue,red\}$ uniformly at random to the edges of $K_n$. 
Let $u,v\in V(K_n)$. 
By Chernoff's bound, 
$|N_{RR}(u,v)| \sim |N_{RB}(u,v)| \sim |N_{BR}(u,v)|\sim |N_{BB}(u,v)| \sim n/4$.

Now if $\ell$ is even, then we proceed like in the proof of Theorem~\ref{thm:even}.
Let $X_B(u)\subseteq N_{BR}(u,v)$, $X_R(v)\subseteq N_{RR}(u,v)$ such that $|X_B(u)| = |X_R(v)| = n/(2(\ell-1))-o(n)$. In other words,  all edges between $u$ and $X_B(u)$ are blue and all edges between $v$ and $X_R(v)$ are red. Now we choose disjoint subsets $X_1,\dots,X_{\ell-3}\subseteq N_{BR}(u,v) \cup N_{RR}(u,v) \setminus (X_B(u)\cup X_R(v))$ such that $|X_i|=n/(2(\ell-1))+o(n)$. 
By Lemma~\ref{prop:matchings_r} there is a red matching between $X_B(u)$ and $X_1$, a blue matching between $X_1$ and $X_2$, etc., each of size $n/(2(\ell-1))-o(n)$. This yields $n/(2(\ell-1))-o(n)$ alternating and internally disjoint paths between $u$ and~$v$. To find the remaining $n/(2(\ell-1))-o(n)$ paths we choose $X'_R(u)\subseteq N_{RB}(u,v)$, $X'_B(v)\subseteq N_{BB}(u,v)$, and $X'_1,\dots,X'_{k-2}\subseteq N_{RB}(u,v) \cup N_{BB}(u,v) \setminus (X'_R(u)\cup X'_B(v))$.

Finally, if $\ell$ is odd, then we choose $X_B(u)\subseteq N_{BR}(u,v)$, $X_B(v)\subseteq N_{BB}(u,v)$, and $X_1,\dots,X_{\ell-3}\subseteq N_{BR}(u,v) \cup N_{BB}(u,v) \setminus (X_B(u)\cup X_B(v))$, and for the remaining paths $X'_R(u)\subseteq N_{RB}(u,v)$, $X'_R(v)\subseteq N_{RR}(u,v)$, and $X'_1,\dots,X'_{\ell-3}\subseteq N_{RB}(u,v) \cup N_{RR}(u,v) \setminus (X'_R(u)\cup X'_R(v))$.
\end{proof}

It might be of some interest to investigate $\kappa_{r,\ell}(G)$ for an arbitrarily graph~$G$. For example, studying alternating connectivity of random graphs could lead to better understanding of this concept.

\providecommand{\bysame}{\leavevmode\hbox to3em{\hrulefill}\thinspace}
\providecommand{\MR}{\relax\ifhmode\unskip\space\fi MR }
\providecommand{\MRhref}[2]{%
  \href{http://www.ams.org/mathscinet-getitem?mr=#1}{#2}
}
\providecommand{\href}[2]{#2}

\end{document}